\newtheorem{theorem}{Theorem}
\newtheorem{corollary}[theorem]{Corollary}
\newtheorem{conjecture}{Conjecture}
\newtheorem{proposition}{Proposition}
\newtheorem{definition}{Definition}[section]
\theoremstyle{definition}
\newcommand{\beql}[1]{\begin{equation}\label{#1}}
\newcommand{\eeq}{\end{equation}}
\newcommand{\comment}[1]{}
\newcommand{\Abs}[1]{{\left|{#1}\right|}}
\newcommand{\N}{{\bf N}}
\newcommand{\Set}[1]{{\left\{{#1}\right\}}}
\newcommand{\RR}{{\mathbb R}}
\newcommand{\CC}{{\mathbb C}}
\newcommand{\ZZ}{{\mathbb Z}}
\newcommand{\NN}{{\mathbb N}}
\newcounter{rem}
\newcounter{step}
\newcounter{mysec}
\newcounter{mysubsec}[mysec]
\newcounter{othm}
\def\theothm{\Alph{othm}}
\newcommand{\Tr}[1]{Theorem~\ref{#1}}
\newcommand{\Prr}[1]{Pro\-position~\ref{#1}}
\newcommand{\Cr}[1]{Corollary~\ref{#1}}
\newtheorem{problem}[conjecture]{{Problem}}
\newcommand{\eqco}{equilibrium configuration}
\begin{document}

\author{Agelos Georgakopoulos and Mihail N. Kolountzakis}
\address{M.K.: Department of Mathematics and Applied Mathematics, University of Crete, Voutes Campus, GR-700 13, Heraklion, Crete, Greece}
\email{kolount@gmail.com}
\address{A.G.: Mathematics Institute, University of Warwick, CV4 7AL, UK}
\thanks{The first author is supported by the European Research Council (ERC) under the European Union's Horizon 2020 research and innovation programme (grant agreement No 639046).
The second author has been partially supported by the ``Aristeia II'' action (Project
FOURIERDIG) of the operational program Education and Lifelong Learning
and is co-funded by the European Social Fund and Greek national resources.}

\title{On particles in equilibrium on the real line}

\begin{abstract}
We study equilibrium configurations of infinitely many identical particles on the real line or finitely many particles
on the circle, such that the (repelling) force they exert on each other depends only on their distance.
The main question is whether each equilibrium configuration needs to be an arithmetic progression.
Under very broad assumptions on the force we show this for the particles on the circle. In the case of infinitely
many particles on the line we show the same result under the assumption that the maximal (or the minimal) gap between successive points is
finite (positive) and assumed at some pair of successive points.
Under the assumption of analyticity for the force field (e.g., the Coulomb force) we deduce some extra rigidity for the configuration: knowing
an equilibrium configuration of points in a half-line determines it throughout.
Various properties of the equlibrium configuration are proved.
\end{abstract}

\maketitle
\section{Introduction}

In this paper we study configurations of identical particles on the real line, or unit circle, 
that are in mechanical equilibrium when they exert repelling forces on each other that depend only on their distance.
We allow an arbitrary strictly monotone decreasing function $F: \RR_+ \to \RR_+$ to
determine the force between two particles as a function of their distance.

A folklore fact in the study of Wigner crystals is that for an infinite system of particles confined on the real line, or a finite system of particles confined on the unit circle ${\mathbb S}^1 \subset \RR^2$, and for various natural force fields, the only \emph{ground state}, i.e.\ the minimiser of the energy of the system, is obtained when the particles are equally spaced\footnote{\url{https://en.wikipedia.org/wiki/Wigner_crystal}}. Our first result is that this holds in much greater generality: for every strictly monotone decreasing function $F: \RR_+ \to \RR_+$, the only configurations of $n\in \N$ particles on ${\mathbb S}^1$ which are in mechanical equilibrium when the force between any two particles at distance $d$ is $F(d)$, are obtained when the distance between any two consequtive particles is constant. By \emph{(mechanical) equilibrium} we mean that the net force tangent to ${\mathbb S}^1$ exerted on each particle is zero (\Cr{corS1}). Similarly, we prove that the only periodic configurations of particles on $\RR$ in equilibrium are obtained by equally spacing the particles (\Cr{corper}). Even more, we prove that any configuration in equilibrium that attains the infimum or supremum of distances between consequtive particles is equally spaced.
 All these facts follow from a very simple argument (\Tr{extrem}), that, if new, might simplify the proofs of the aforementioned statement about ground states of specific potentials.

If the configuration is allowed to be aperiodic, then the problem is to the best of our knowledge open
even for specific force fields like e.g.\ a Coulomb force $F(d) = d^{-2}$.
In fact our original motivation was the following question asked by I. Benjamini \cite{RWatW}

\begin{problem}  \label{prb}
If a configuration of particles on $\RR$ is in (mechanical) equilibrium, do all distances between subsequent particles have to be equal?	
\end{problem}

\emph{Equilibrium} here means that the total force exerted on each particle from each side is finite, and the net force exerted on each particle is zero.

This problem is open for all strictly monotone decreasing functions $F: \RR_+ \to \RR_+$, and we find it interesting that, although it is not clear that the answer is  positive for e.g.\ $F(d) = d^{-2}$, it is also not clear whether there exists $F$ for which the answer is negative. We prove however that if we nail one of the particles at a fixed position, then we can obtain non-trivial \eqco s for continuous $F$ (\Tr{nailed}).

\medskip
We also obtain the following result about the Coulomb force (or somewhat more general analytic forces).
A configuration in equilibrium with bounded distances between consequtive particles is uniquely determined by any of its \emph{tails} (i.e.\ co-final subsequences); see \Tr{th:main}.

In the above discussion the particles are tacitly assumed to have equal masses. If we allow them to have different masses, then non-equally-spaced stable configurations do exist as observed by Ulam \cite[Chapter VII, \S 4]{Ulam}.

Stable particle configurations for generic force functions are also considered in \cite{cohn_universally_2007}, although with a different focus. For an analogue of \Prr{prb} in higher dimensions see \cite{cohn_point_2010}.

\section{No extremal gaps}

By an \emph{\eqco} we mean a bi-infinite sequence of real numbers such that a configuration of particles positioned at those numbers is in equilibrium in the sense defined above. An \eqco\ is \emph{trivial}, if it is an arithmetic progression, or in other words, if consequtive particles have equal distances.

For a pair of real numbers $x,y$, we write \emph{$xy$} for the absolute value of the force between a particle at $x$ and a particle at $y$. By a \emph{gap} we mean the distance (i.e.\ difference) between two consequtive members of an \eqco.

\begin{theorem} \label{extrem}
If an \eqco\ has a gap of maximal or minimal length, then it is trivial.
\end{theorem}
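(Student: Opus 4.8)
The plan is to argue by contradiction using a monotonicity/extremality comparison. Suppose the \eqco\ has a gap of maximal length, say the gap between two consecutive particles $a < b$ with $b - a = M$ being maximal among all gaps. I would look at the particle at $a$ (or at $b$) and compute the net force on it, using the equilibrium condition that this net force is zero. The key observation is that because the gap $ab$ is maximal, every particle to the left of $a$ is pushed together more tightly than the mirror image of the particles to the right of $a$ across the point $a$; since $F$ is strictly monotone decreasing, the leftward force on $a$ must strictly dominate the rightward force, unless \emph{all} gaps equal $M$. This would contradict equilibrium (net force zero) unless the configuration is trivial.

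More precisely, let me enumerate the particles as $\dots, x_{-1}, x_0, x_1, \dots$ with $x_0 = a$, $x_1 = b$, and $x_1 - x_0 = M$ the maximal gap. The net rightward force on $x_0$ is
\[
\sum_{k \geq 1} x_0 x_k - \sum_{k \leq -1} x_0 x_k = 0.
\]
The first step is to pair up the term $x_0 x_k$ (for $k \geq 1$) with the term $x_0 x_{1-k}$ (for $k \leq 0$, i.e.\ the particle symmetric to $x_k$ about $x_0$'s left side). Because consecutive gaps on the right, namely $x_k - x_{k-1}$, are each at most $M$, while one can try to show the corresponding distances $x_0 - x_{-j}$ accumulate at least as fast, I would compare $|x_0 - x_k|$ with $|x_0 - x_{-j}|$ for matched indices. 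The cleanest formulation: since the right gap $x_1 - x_0 = M$ is \emph{maximal}, the distance from $x_0$ to the $j$-th particle on the right, $x_j - x_0$, is at most $jM$, whereas the distance to the $j$-th particle on the left, $x_0 - x_{-j}$, satisfies $x_0 - x_{-j} \geq$ the sum of $j$ gaps. The asymmetry I need is that the right side distances are ``stretched'' by the maximal gap sitting immediately to the right of $x_0$.

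The main obstacle I expect is making the force comparison rigorous when the configuration is not symmetric and both tails are infinite: a naive term-by-term pairing may not directly yield a strict inequality, because individual left gaps could exceed individual right gaps. The trick will be to exploit that the \emph{first} right gap equals the global maximum $M$. I would therefore compare the particle at $x_0$ against its right neighbor $x_1$: the force $x_0 x_1 = F(M)$ is the \emph{smallest} possible nearest-neighbor force (since $M$ is the largest gap and $F$ is decreasing). Summing forces and using that every nearest-neighbor distance elsewhere is $\leq M$, hence every such force is $\geq F(M)$, one sets up an inequality that is strict unless all gaps equal $M$. The cleanest route is probably a telescoping/monotonicity argument: translate the whole configuration right by $M$ and compare forces, or consider the function sending the configuration to the net force and show that the extremal gap forces a sign. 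For the minimal gap the argument is symmetric, with all inequalities reversed and $M$ replaced by the minimal gap $m$, using that $F(m)$ is then the largest nearest-neighbor force. I would present the maximal-gap case in full and remark that the minimal-gap case follows by the same reasoning with reversed inequalities.
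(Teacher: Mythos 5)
There is a genuine gap in your proposal: the comparison you actually set up does not work, and you never replace it with one that does. Your ``key observation'' pairs, across the single particle $x_0$, the force from its $j$-th right neighbour against the force from its $j$-th left neighbour, hoping the left distances are dominated by the right ones. As you yourself concede, maximality of the first right gap gives no such domination for $j\ge 2$: with left gaps $M/2, M, M, \ldots$ and right gaps $M, M/2, M/2, \ldots$ the third left distance is $5M/2$ while the third right distance is $2M$, so the term-by-term inequality fails already at $j=3$. Your fallback suggestions --- ``summing forces'' over nearest neighbours, ``translate the configuration by $M$'', ``the extremal gap forces a sign'' --- are gestures rather than arguments: nearest-neighbour forces alone say nothing about equilibrium (every particle acts on every other), you never specify which two quantities the final strict inequality compares, and you never identify where strictness comes from when the maximal gap is attained by several pairs.

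The missing idea, which is how the paper proves the theorem, is to use the equilibrium of \emph{both} endpoints $x<y$ of the maximal gap and to compare like with like: the leftward force on $y$ against the leftward force on $x$, and separately the rightward forces. Writing $w_1, w_2, \ldots$ for the particles to the left of $x$ and $z_1, z_2, \ldots$ for those to the right of $y$, the particles to the left of $y$ are exactly $x, w_1, w_2, \ldots$, so the two left sums run over the same particles with indices shifted by one; maximality gives $|y-w_i| = |y-x| + |x-w_i| \ge |x-w_i| + |w_i-w_{i+1}| = |x-w_{i+1}|$, hence each term of $F^-(y)$ is at most the corresponding term of $F^-(x)$. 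Choosing the maximal gap so that the gap immediately to its left is strictly smaller than $M$ (possible for a non-trivial configuration, after a reflection if necessary) makes the first comparison, $yx < xw_1$, strict, so $F^-(y) < F^-(x)$. The mirror argument gives $F^+(y) \ge F^+(x)$ (only weakly, since $|y-z_1|$ may equal $M$). Equilibrium at both points then yields $F^+(x) = F^-(x) > F^-(y) = F^+(y) \ge F^+(x)$, a contradiction. Note that this scheme entirely avoids the cross-comparison of the two sides of one particle that blocked your attempt; that one-particle comparison cannot succeed, because the equilibrium of a single particle places no useful constraint when the two half-configurations are unrelated.
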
	
\begin{proof}
	Suppose there is a non-trivial \eqco\ $\ldots, w_2, w_1, x, y, z_1, z_2, \ldots$, where the gap $[x,y]$ is maximal (see Fig.\ \ref{fig:max-length}), i.e.\ $|x-y|\geq |p-q|$ for any two (consequtive) members $p,q$ of the sequence. Since the \eqco\ is not trivial, we may assume without loss of generality that $|x-y| > |x-w_1|$. Writing $F^-(x)$ for the force exerted on a particle at $x$ from the left, we have

\begin{figure}[h]
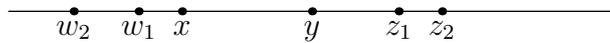

\begin{center}
\begin{asy}
import graph;
size(8cm);
pair l=(-4,0), r=(10,0);
pair w2=(-2.5, 0), w1=(-1,0), x=(0,0), y=(3,0), z1=(5,0), z2=(6,0);
draw(l -- r);
dot(w2); dot(w1); dot(x); dot(y); dot(z1); dot(z2);
label("$x$", x, S);
label("$y$", y, S);
label("$z_1$", z1, S);
label("$z_2$", z2, S);
label("$w_1$", w1, S);
label("$w_2$", w2, S);
\end{asy}
\end{center}
\caption{The points around a gap of maximum length $(x, y)$.} \label{fig:max-length}
\end{figure}

\begin{align} \label{sums}
\begin{split}
F^-(x) &= xw_1 + xw_2 + xw_3 + \ldots, \text{and} \\
F^-(y) &= yx + yw_1 + yw_2 + \ldots.
\end{split}
\end{align}

	Let us compare the $j$th summand of the first line to the $j$th summand of the second one: since $|x-y| > |x-w_1|$, we have $yx < xw_1$ by the strict monotonicity of the forces. Moreover, we have $|y -w_i| = |y-x| + |x- w_i| \geq |x - w_{i+1}| = |x - w_{i}|+ |w_{i} - w_{i+1}|$. Thus $yw_i \le xw_{i+1}$. Combining these inequalities we obtain $F^-(y)< F^-(x)$.
	
	By repeating the argument for the forces $F^+(y), F^+(x)$ exerted at $y,x$ from the right, the only difference being that $|y-z_1|$ might equal $|x-y|$,  we obtain $F^+(y)\geq F^+(x)$, reaching a contradiction.
	
	\medskip
	If the gap $[x,y]$ is minimal, then the same argument applies with all inequalities reversed.
\end{proof}

As an immediate corollary of \Tr{extrem}, we obtain

\begin{corollary} \label{corper}
	If an \eqco\ is periodic, then it is trivial.
\end{corollary}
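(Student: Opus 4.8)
The plan is to deduce this directly from \Tr{extrem}: I will show that any periodic \eqco\ must possess a gap of maximal (equivalently, minimal) length, so that the hypothesis of \Tr{extrem} is met and triviality follows at once.

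First I would fix notation. Write the \eqco\ in increasing order as $\ldots, x_{-1}, x_0, x_1, \ldots$ and set $d_n = x_{n+1} - x_n$ for the sequence of gaps. Periodicity of the configuration means that the underlying point set is invariant under translation by some $T>0$; after reindexing this amounts to the existence of an integer $p \ge 1$ with $x_{n+p} = x_n + T$ for every $n \in \ZZ$. Subtracting two consecutive such relations gives $d_{n+p} = x_{n+p+1}-x_{n+p} = (x_{n+1}+T)-(x_n+T) = d_n$ for all $n$, so the gap sequence is itself $p$-periodic.

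The key observation is then that a $p$-periodic sequence takes only finitely many values, namely those among $d_0, \ldots, d_{p-1}$. A finite set of positive reals attains both its maximum and its minimum, so there is an index $n_0$ with $d_{n_0} \ge d_n$ for all $n$ (and likewise an index realising the minimum). In other words the configuration has a gap of maximal length, and applying \Tr{extrem} immediately yields that the \eqco\ is trivial.

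There is no genuine obstacle here; the entire content is the passage from periodicity to the finiteness of the set of gap lengths, which is exactly what guarantees that the supremum of the gaps is actually \emph{attained} and hence that the hypothesis of \Tr{extrem} is satisfied. It is worth remarking that without periodicity the supremum of the gaps need not be attained, which is precisely why the aperiodic version of \Prb{prb} remains open.
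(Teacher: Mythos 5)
Your proof is correct and follows exactly the route the paper intends: the paper treats this as an immediate corollary of \Tr{extrem}, the implicit point being precisely yours, that periodicity forces the gap sequence to take only finitely many values, so a maximal gap is attained and \Tr{extrem} applies. You have simply written out the details the paper leaves to the reader.
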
 

This can be adapted to configurations on the circle $S^1$:

\begin{corollary} \label{corS1}
Let $\{x_1, \ldots, x_n\}, x_i \in S^1$, be a configuration of particles constrained on $S^1$ in equilibrium. Suppose that the (tangential) force they exert on each other is a monotone decreasing function of their distance. Then the distance $d(x_i, x_{(i+1)mod n})$ of any two consequtive particles is constant.
\end{corollary}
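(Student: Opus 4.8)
The plan is to adapt the proof of \Tr{extrem}, replacing the two half-lines by the two arcs into which a gap splits the circle, and paying attention to the wrap-around. Fix the circumference $L$ and parametrise $S^1$ by arc length. By the stated hypothesis the tangential force between two particles at distance $d$ along the shorter arc is $\tilde F(d)$ for a positive strictly decreasing $\tilde F$, directed so as to push the two particles apart. Suppose, for contradiction, that the configuration is not equally spaced. There are only $n$ gaps, so a gap of maximal length $g$ exists, and since not all gaps are equal we may choose a maximal gap $[x,y]$ whose neighbouring gap $[w_1,x]$ (on the side of $x$ away from $y$) is strictly shorter than $g$, exactly as in \Tr{extrem}.

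First I would dispose of the degenerate large-gap cases. If $g\ge L/2$ then all particles lie in a closed arc of length $\le L/2$; the two particles at the ends of that arc then receive tangential forces pointing only to one side, so their net force cannot vanish (the sole exception being two antipodal particles, which are already equally spaced). Hence we may assume $g<L/2$, so that the antipodes of $x$ and of $y$ fall in the interiors of the two long arcs.

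Next, at each of $x$ and $y$ split the (vanishing) net tangential force into the part $P_-$ coming from the particles on the shorter counter-clockwise side and the part $P_+$ coming from those on the shorter clockwise side; equilibrium then reads $P_+=P_-$ at each of the two points. Running the term-by-term comparison of \Tr{extrem} on the clockwise side (the side of the $w_i$) yields $P_+(y)<P_+(x)$: the force exerted on $y$ by $w_i$ is matched with the force exerted on $x$ by $w_{i+1}$ and is no larger, because every gap is $\le g$; and the leading term, the force of $x$ on $y$, equals $\tilde F(g)$, which is strictly smaller than the force of $w_1$ on $x$ since the gap $[w_1,x]$ is strictly shorter than $g$. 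The mirror comparison on the counter-clockwise side gives $P_-(y)\ge P_-(x)$.

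The main obstacle is the wrap-around near the antipodes, where---unlike on the line---the two force sums no longer range over the same particles: a particle lying just beyond the antipode of $x$ is on the shorter clockwise side of $x$ but on the shorter counter-clockwise side of $y$. Such ``switching'' particles only contribute extra nonnegative terms to $P_+(x)$, so they leave $P_+(y)<P_+(x)$ intact; for the inequality $P_-(y)\ge P_-(x)$ the pairing consumes one switching particle, and if there were none a short check shows that either some gap would exceed $g$ or all particles would lie in a half-circle, once more contradicting equilibrium at $x$ or $y$. With both inequalities in hand, subtracting the equilibrium identities $P_+(x)=P_-(x)$ and $P_+(y)=P_-(y)$ gives $0=\big(P_+(x)-P_+(y)\big)-\big(P_-(x)-P_-(y)\big)>0$, a contradiction. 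Therefore every gap has the same length.
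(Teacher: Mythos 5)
Your proof is correct and takes essentially the same route as the paper's: pick a maximal gap whose neighbouring gap is strictly shorter, split the vanishing net force at $x$ and at $y$ according to the two half-circles determined by each point and its antipode, compare the two finite sums term by term, and use the fact that every gap is at most $g$ to see that each matched term exists despite the wrap-around --- your ``switching particles'' discussion is exactly the paper's one-line counting argument that $w_{(k+1)\bmod n}$ lies in $S_x^-$ because $d(w_k,w_{(k+1)\bmod n})\le d(x,y)$. Your explicit exclusion of the degenerate case $g\ge L/2$ is a detail the paper glosses over, but it does not alter the nature of the argument.
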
 
\begin{proof}
Suppose, to the contrary, that $d(x_i, x_{(i+1)mod n})$ is not constant. Then there are two consequtive particles $x,y$ maximising that distance, such that the distance between $x$ and its other neighbouring particle $w_1$ is strictly less that $d(x,y)$. We proceed as in the proof of \Tr{extrem}, the only difference being that now 
$F^-(x)$ denotes the force exerted on a particle at $x$ by particles lying on one of the two half circles $S_x^-$ between $x$ and its antipodal point $x'$ on $S^1$.

\begin{figure}[h]
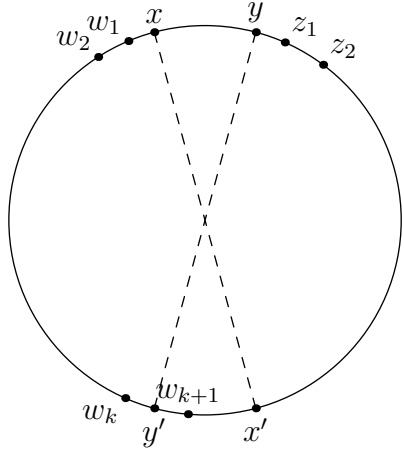

\begin{center}
\begin{asy}
import graph;
size(6cm);

draw(unitcircle);

pair up=(0,1);

pair x=rotate(15)*up, xx=-x, y=rotate(-15)*up, yy = -y;
pair w1 = rotate(8)*x, w2 = rotate(10)*w1;
pair z1 = rotate(-9)*y, z2 = rotate(-13)*z1;
pair wk = rotate(-9)*yy, wk1 = rotate(10)*yy;

dot(x); dot(xx); dot(y); dot(yy);
dot(w1); dot(w2); dot(wk); dot(wk1); dot(z1); dot(z2);
draw(x--xx, dashed); draw(y--yy, dashed);
label("$x$", x, N); label("$x'$", xx, S);
label("$y$", y, N); label("$y'$", yy, S);
label("$w_1$", w1, NW); label("$w_2$", w2, NW);
label("$z_1$", z1, NE); label("$z_2$", z2, NE);
label("$w_k$", wk, SW); label("$w_{k+1}$", wk1, N);
\end{asy}
\end{center}
\caption{The points around an arc of maximum length $(x, y)$.} \label{fig:max-length-circle}
\end{figure}

Thus the sums in \eqref{sums} have finitely many summands. Since the $i$th summand of the first sum is greater than the $i$th summand of the second one by the same argument, it suffices to show that the first sum has at least as many summands as the second. This is indeed true, for if $yw_k$ is the last summand of the second sum, then the particle $w_{(k+1)mod n}$ lies in $S_x^-$ because $d(w_k, w_{(k+1)mod n})\leq d(x,y)$ by the choice of $x,y$.

The rest of the proof is identical to that of \Tr{extrem}.
\end{proof}

\section{Uniqueness of continuation under analytic forces} \label{secuniq}
\begin{definition}
Call an increasing sequence $x_n \in \RR$, $n\in\ZZ$, {\em uniformly discrete}
if there are constants $0<c\le C<\infty$ such that
$$
c \le x_n-x_{n-1} \le C,\ \ \ \forall n \in \ZZ.
$$
\end{definition}

We show the following.

\begin{theorem}\label{th:main}
Let $x_n \in \RR$, $n\in\ZZ$ be a uniformly discrete configuration of particles
subject to repellent Coulomb forces
$$
F(d) = \frac{1}{d^2}.
$$
Suppose that the particles at the set $\Set{x_n \ge 0}$ are in equilibrium.
Then the locations $\Set{x_n < 0}$ are uniquely determined.
\end{theorem}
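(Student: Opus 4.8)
The plan is to encode the unknown left-hand positions as the poles of a single bounded holomorphic function on the right half-plane, and to recover that function from the equilibrium conditions by a half-plane uniqueness theorem. First I would introduce the meromorphic function
\[
L(z) = \sum_{x_n<0}\frac{1}{(z-x_n)^2},
\]
whose double poles are exactly the negative positions. Since every pole lies in $\Set{\Re z<0}$, the function $L$ is holomorphic on a neighbourhood of the closed right half-plane $\Set{\Re z\ge 0}$; uniform discreteness gives $|x_n|\ge c|n|-O(1)$, so the series converges, and using $|z-x_n|\ge \Re z+|x_n|\ge|x_n|$ for $\Re z\ge 0$ and $x_n<0$ one checks that $L$ is bounded on $\Set{\Re z\ge 0}$ (in fact $L(t)=O(1/t)$ as $t\to+\infty$). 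The point of this function is that equilibrium pins down its values along the known part of the configuration: writing the vanishing of the net force at a particle $x_k\ge 0$ as
\[
\sum_{x_n<x_k}\frac{1}{(x_k-x_n)^2}=\sum_{x_n>x_k}\frac{1}{(x_k-x_n)^2},
\]
and observing that every term involving a \emph{non-negative} $x_n$ is already determined by the data, I can solve for the remaining contribution to obtain $L(x_k)=B_k-A_k$, where $A_k=\sum_{0\le x_n<x_k}(x_k-x_n)^{-2}$ and $B_k=\sum_{x_n>x_k}(x_k-x_n)^{-2}$ are explicit known quantities. Thus equilibrium prescribes the values of the holomorphic function $L$ at the infinite, right-going sequence of non-negative particles $x_k\to+\infty$.

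To prove uniqueness I would compare two uniformly discrete equilibrium configurations sharing the same non-negative positions, with associated functions $L_1,L_2$. Their difference $D=L_1-L_2$ is bounded and holomorphic on the open right half-plane and, by the previous step, vanishes at every $x_k>0$. These zeros lie on the positive real axis, tend to infinity, and have gaps bounded above by $C$, so $\sum_k \frac{x_k}{1+x_k^2}$ diverges. Since a bounded holomorphic function on the right half-plane that is not identically zero must have its zeros satisfy the Blaschke condition $\sum_k \frac{\Re z_k}{1+|z_k|^2}<\infty$, we conclude $D\equiv 0$. Hence $L_1\equiv L_2$ on the half-plane, and by the identity theorem for meromorphic functions $L_1$ and $L_2$ agree on all of $\CC$; in particular they have the same poles, i.e.\ the two configurations have the same negative positions, which is exactly the asserted uniqueness.

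The hard part is the analytic uniqueness input: the fact that a bounded holomorphic function on a half-plane vanishing on a positive-real sequence of bounded gaps must vanish identically (a Blaschke/Carlson-type statement). Everything else is bookkeeping — translating equilibrium into the interpolation data $L(x_k)=B_k-A_k$, and checking via uniform discreteness that $L$, and hence $D$, is bounded on the half-plane. I expect the decay $L(t)=O(1/t)$ to leave ample room, so the precise form of the density theorem invoked is not delicate; what matters is only that the nodes $x_k$ have positive lower density, guaranteed by the upper gap bound $C$. Finally, it is worth noting that the argument uses the equilibrium equations only at the non-negative particles, and relies on the Coulomb kernel solely through the analyticity and decay of $z\mapsto (z-w)^{-2}$, so it should extend to any force whose kernel shares these two features.
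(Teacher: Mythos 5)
Your proposal is correct and takes essentially the same route as the paper: the difference of the two candidate left-configurations' force fields is a bounded holomorphic function on the right half-plane that vanishes at the non-negative particle positions, which (by uniform discreteness) violate the Blaschke condition, forcing the difference to vanish identically and hence the pole sets --- i.e.\ the negative positions --- to coincide. The only cosmetic difference is that you invoke the half-plane Blaschke condition directly, whereas the paper transports everything to the unit disk via the Cayley transform $w \mapsto \frac{w-1}{w+1}$ and cites Rudin's Theorem 15.23 there.
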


\begin{proof}
Suppose not, and suppose that the two sets of points $X, Y \subseteq (-\infty,0)$
(each of them uniformly discrete, in the obvious way)
can both cause the
electrons at the points $W = \Set{x_n \ge 0}$ to experience zero total force.
In other words, the two systems of electrons, at $X \cup W$ and at $Y \cup W$ are such that the electrons at $W$
are in equilibrium.
It follows that for each $w \in W$ the force exerted on $w$ due to electrons at $X$ is the same as the force exerted on $w$ due
to electrons at $Y$.

The Coulomb force exerted at a point $w$ on the nonnegative real semi-axis by the electrons at $X$ is given by
$$
f_X(w) = \sum_{x \in X} \frac{1}{(x-w)^2},
$$
up to redefining the physical constants, and similarly for the force $f_Y(z)$ due to electrons in $Y$.
Since these must be the same at each $w \in W$ we deduce that the function
\beql{fnc}
f(w) = f_X(w) - f_Y(w) = \sum_{p \in X \triangle Y} \frac{\epsilon_p}{(p-w)^2},
\eeq
where $X \triangle Y$ is the symmetric difference of $X$ and $Y$ and $\epsilon_p=\pm 1$ depending on whether $p \in X$ or $p \in Y$,
vanishes at each $w \in W$.
It is easy to see that $f(w)$ is well defined (the series at \eqref{fnc} converges) at every point of the complex plane except at $X \triangle Y$, at each point of which it has a pole of order 2,
and is an analytic function in $\CC\setminus(X\triangle Y)$.
Since, for $\Re w\ge 0$ we have
$$
\Abs{f(w)} \le \sum_{p \in X \triangle Y} \frac{1}{\Abs{p-w}^2} \le \sum_{p \in X \triangle Y} \frac{1}{\Abs{p}^2} < \infty,
$$
it is clear that $f$ is bounded on the closed right half plane. Our plan is to use \Tr{th:rudin} below to show that $f$ is identically 0.

We write, as we may,
$$
W = \Set{w_0=0 < w_1 < w_2, \ldots}
$$
for the points of $W$ and we assume that $c\le w_n - w_{n-1} \le C$ for all $n>0$. This implies that
\beql{growth}
c n \le w_n \le C n,\ \ \ (n \ge 0).
\eeq
Define the linear fractional transformation
$$
z = z(w) = \frac{w-1}{w+1},\ \ \ w = w(z) = \frac{1+z}{1-z}
$$
and note that $z(w)$ maps the open right half plane $\Set{\Re{w} > 0}$ bijectively to the open unit disk $\Set{\Abs{z}<1}$ (with $1 \to 0, 0 \to -1, i \to i$).

The function $f(w)$ vanishes at all points of $W$ and therefore the analytic function on the unit disk $\Set{\Abs{z}<1}$
$$
g(z) = f(w(z))
$$
vanishes at all (real) points $z_n = z(w_n) = 1-\frac{2}{w_n+1}$, $n\ge 0$, of the open unit disk.
Since $f$ is bounded on the open right half plane so is $g$ on the open unit disk.

Because of \eqref{growth} we have
\beql{roots}
1-z_n = \frac{2}{1+w_n} \ge \frac{2}{1+Cn}
\eeq
and hence
\beql{diverge}
\sum_n(1-\Abs{z_n}) = \infty.
\eeq
We now use the following result.
\begin{theorem}[\cite{rudin}, Theorem 15.23]\label{th:rudin}
If a function $g$ is analytic and bounded in the open unit disk $U$ and vanishes at points $z_n \in U$ 
satisfying \eqref{diverge}, then $g$ is identically 0 in $U$.
\end{theorem}
(This is a rather simple consequence of Jensen's formula.)

Thus Theorem \ref{th:rudin} implies that $g \equiv 0$ on $U$, hence $f \equiv 0$ on the open right half plane,
and by analytic continuation $f$ is $0$ on $\CC\setminus(X \triangle Y)$. So $f$ has no singularities at all, a contradiction,
unless $X = Y$, as we had to prove.
\end{proof}

\begin{corollary}\label{periodic-tail}
	Let $S$ be a uniformly discrete \eqco\ such that some tail of $S$ is periodic. Then $S$ is trivial.
\end{corollary}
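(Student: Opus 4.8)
The plan is to show that periodicity of the tail forces the \emph{whole} configuration to be periodic, and then to invoke \Cr{corper}. One cannot hope to apply \Tr{extrem} directly here, since the maximal gap occurring in the periodic tail need not be the maximal gap of $S$: the (a priori non-periodic) left part could contain even larger gaps. The mechanism that rules this out is the uniqueness of continuation supplied by \Tr{th:main}, which I would exploit by comparing $S$ with a suitable translate of itself.

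Concretely, let $p>0$ be the spatial period of the tail, so that there are $N\in\ZZ$ and $k\in\N$ with $x_{n+k}=x_n+p$ for all $n\ge N$. First I would consider the translated configuration $S+p$. Since the Coulomb force depends only on distances, translation carries equilibrium configurations to equilibrium configurations, so $S+p$ is again a uniformly discrete \eqco. The identity $x_n+p=x_{n+k}$, valid precisely for $n\ge N$, shows that $S$ and $S+p$ have exactly the same set of points lying to the right of $x_{N+k}$; call this common co-final set $W$. Indeed, the points of $S+p$ that are $\ge x_{N+k}$ are the $x_n+p=x_{n+k}$ with $n\ge N$, i.e.\ the $x_m$ with $m\ge N+k$, which is also the set of points of $S$ that are $\ge x_{N+k}$.

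Now I would align this with \Tr{th:main}. After translating both $S$ and $S+p$ by $-x_{N+k}$, the set $W$ becomes exactly the nonnegative part $\Set{x_n\ge 0}$ of each of the two configurations, and these nonnegative points are in equilibrium, being part of the respective full equilibrium configurations. \Tr{th:main} then asserts that the points in $(-\infty,0)$ are uniquely determined by $W$ together with the requirement that $W$ be in equilibrium; hence $S$ and $S+p$ must agree on their negative parts as well, giving $S=S+p$. Thus $S$ is invariant under translation by $p$, and being uniformly discrete it is periodic, whereupon \Cr{corper} yields that $S$ is trivial. The only step requiring genuine care is the verification that $W$ really is the common nonnegative part of both $S$ and $S+p$ and consists of equilibrium points for both; this is the routine bookkeeping around the identity $x_n+p=x_{n+k}$, and once it is in place \Tr{th:main} applies verbatim.
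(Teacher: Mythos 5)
Your proof is correct and follows essentially the same route as the paper: both arguments combine the uniqueness of continuation from \Tr{th:main} with translation invariance of the force law to conclude that $S$ is globally periodic, and then finish with \Cr{corper}. The only cosmetic difference is packaging — you compare $S$ directly with its translate $S+p$, while the paper compares the unique left-continuations of the tail $T$ and of its shifted copy $T'$ (the tail minus one period) — but these are the same argument.
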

\begin{proof}
	Let $T$ be such a tail, and let $T'$ be the subsequence of $T$ obtained by omitting the first period. By \Tr{th:main}, $T$ can be brought to equilibrium by a unique sequence preceeding it. We claim that this sequence must start with the period of $T$. Indeed, applying \Tr{th:main} to $T'$, and noting that $T$ is a shifted copy of $T'$, we see that the two unique continuations coincide.
	
	This easily implies that the whole sequence $S$ is periodic, and by \Cr{corper} trivial.
\end{proof}

\noindent{\bf Generalization.}
The proof of Theorem \ref{th:main} and Corollary \ref{periodic-tail} is valid for more general forces than the Coulomb forces.
The force function $F(d)$ needs to be an analytic function on the open right half complex plane, whose values on the positive real axis
are positive and satisfies
$$
\int_1^{+\infty} F(x)\,dx < \infty.
$$
Such functions are, for instance, the functions
$$
F(d) = \frac{1}{d^k},\ \ \ (k\ge 2),
$$
and
$$
F(d) = e^{-d^k},\ \ \ (k \ge 1).
$$

\section{Other remarks}

The following facts are easy to check
\begin{proposition} \label{ratios}
	If $x,y,z$ are consequtive points in an \eqco, then\\ $|x-y|/|y-z|$ is bounded.
\end{proposition}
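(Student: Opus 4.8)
The plan is to fix three consecutive points $x<y<z$ of the \eqco, write $a=|x-y|$ and $b=|y-z|$ for the two gaps, and exploit the equilibrium of the \emph{middle} particle $y$. Writing, as in the proof of \Tr{extrem}, $F^-(y)$ and $F^+(y)$ for the total force exerted on $y$ from the left and from the right, equilibrium means $F^-(y)=F^+(y)=:S$. Each of these one-sided forces contains the contribution of the nearest neighbour on that side, so $xy\le F^-(y)=S$ and $yz\le F^+(y)=S$; since $xy=F(a)$ and $yz=F(b)$, this already gives the easy lower bounds $F(a)\le S$ and $F(b)\le S$, i.e.\ both gaps are bounded below in terms of $S$. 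The whole point, therefore, is to prove the matching \emph{reverse} estimate: that there is a constant $\delta\in(0,1)$, depending only on $F$, such that the nearest-neighbour contribution is at least a fixed fraction of the whole one-sided force, $F(a)\ge \delta S$ and $F(b)\ge\delta S$. Granting this, $F(a)$ and $F(b)$ are squeezed between $\delta S$ and $S$, hence $\delta\le F(a)/F(b)\le 1/\delta$, and inverting the strictly decreasing force law (for the Coulomb case $F(d)=d^{-2}$ one simply gets $\sqrt\delta\le a/b\le 1/\sqrt\delta$) yields the claimed two-sided bound on $|x-y|/|y-z|$.

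To prove the reverse estimate I would bound the ``tail'' of the one-sided force, i.e.\ the force on $y$ coming from all left particles \emph{other} than $x$. Labelling the particles to the left of $x$ by $v_1,v_2,\ldots$ and setting $P_i=|x-v_i|$, this tail is $\sum_{i\ge1}F(a+P_i)$, which is termwise dominated by $\sum_{i\ge1}F(P_i)=F^-(x)$, and $F^-(x)=F^+(x)$ by the equilibrium of $x$. The idea is that a tail large relative to $F(a)$ forces the particles $v_i$ to accumulate just to the left of $x$, and the same computation applied at $x$ (and then inductively further left) should show that such clustering can only be sustained if the gaps stay comparably small on \emph{both} sides, so that a short gap can never sit next to a long one. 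Here the defining property of an \eqco --- that each one-sided force series \emph{converges} --- is what forbids an infinite cluster of vanishing gaps from coexisting with a macroscopic gap. Carrying this out symmetrically on the right, using the equilibrium of $z$, would give the bound $F(b)\ge\delta S$ as well.

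The hard part will be precisely this control of the far field: turning the qualitative statement ``points do not cluster next to a large gap'' into the quantitative $F(a)\ge\delta S$ with a $\delta$ independent of the triple. The clean term-by-term comparisons above are unfortunately lossy --- bounding the left tail by $F^-(x)$ and then re-expressing $F^-(x)=F^+(x)$ tends to close a circle and yields only trivial inequalities --- so the estimate has to come from the convergence of the force series together with the monotonicity of the gaps forced by equilibrium, rather than from a single pairing of summands. Once the force comparison $F(a)\asymp F(b)$ is in hand, the passage back to $a\asymp b$ is routine: for $F(d)=d^{-2}$ it is an explicit square root, and for the more general integrable analytic forces considered in \Sr{secuniq} it follows from strict monotonicity together with the integrability of $F$, which supplies a quantitative rate of decrease.
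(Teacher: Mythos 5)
Your proposal has a genuine gap, and it is exactly the step you flag yourself: the reverse estimate $F(a)\ge\delta S$, $F(b)\ge\delta S$ with $\delta$ depending only on $F$ is never proved, and without it nothing beyond the trivial inequalities $F(a)\le S$, $F(b)\le S$ is established. Worse, the estimate is false in the generality you state it. Take $F(d)=e^{-d}$ and the trivial configuration with common gap $d$: then $S=F^-(y)=\sum_{k\ge1}e^{-kd}=e^{-d}/(1-e^{-d})$, so $F(d)/S=1-e^{-d}\to 0$ as $d\to 0$, hence no $\delta$ depending only on $F$ can work uniformly over configurations; it would at best have to depend on the scale of the configuration. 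Moreover, even granted $F(a)\asymp F(b)$, the passage to $a\asymp b$ fails for such forces: $F(a)/F(b)=e^{b-a}$ controls the difference $b-a$, not the ratio $b/a$, and integrability of $F$ does not by itself supply the ``quantitative rate'' you invoke. So both halves of your reduction need repair, and the far-field control you hope for is, as you yourself observe, circular when attempted by pairing summands against $F^-(x)$.

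The missing idea, which is how the paper argues, is to compare forces \emph{across the triple} rather than comparing the nearest-neighbour force with the total one-sided force. The paper deduces the statement from Proposition~\ref{monoforce}: for a finite block of consecutive particles in equilibrium, the net forces exerted on the block's particles by particles \emph{of the block only} are monotone along the block. This needs no tail estimates at all: the inner force on each particle equals minus the outer force (by equilibrium), and the outer force --- push from particles left of the block minus pull from particles right of it --- strictly decreases as one moves rightward through the block. Applied to the block $\{x,y,z\}$ with $a=|x-y|$, $b=|y-z|$, the inner forces are $-F(a)-F(a+b)$, then $F(a)-F(b)$, then $F(b)+F(a+b)$, and monotonicity forces $F(a)-F(b)\le F(b)+F(a+b)$, i.e.\ $F(a)\le 2F(b)+F(a+b)\le 3F(b)$, and symmetrically $F(b)\le 3F(a)$. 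This is a purely local two-sided force comparison (for Coulomb forces it gives $1/\sqrt{3}\le a/b\le\sqrt{3}$), obtained with no $\delta$, no convergence argument, and no control of far particles; it is precisely the quantitative content your plan lacks.
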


Proposition~\ref{ratios} is an easy consequence of

\begin{proposition} \label{monoforce}
	If $S$ is a finite set of consequtive particles in an \eqco, then the (signed) forces exerted on $S$ by particles in $S$ only are monotone.
\end{proposition}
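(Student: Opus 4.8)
The plan is to use the equilibrium hypothesis to convert the assertion about the \emph{internal} forces (those coming from the particles of $S$) into a transparent assertion about the \emph{external} forces (those coming from the particles of the \eqco\ lying outside $S$), which are manifestly monotone. Write $S=\Set{p_1<p_2<\cdots<p_m}$ for the consecutive particles and fix the convention that a rightward force is positive. For each index $i$ put
$$
G_i=\sum_{j<i}F(p_i-p_j)-\sum_{j>i}F(p_j-p_i),
$$
the signed force exerted on $p_i$ by the remaining particles of $S$; the claim is that $i\mapsto G_i$ is monotone (increasing, with this convention, though the direction is immaterial to the statement).

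First I would separate the particles of the \eqco\ not in $S$ into those lying to the left of $p_1$ and those lying to the right of $p_m$, and denote by $A_i>0$ the total (rightward) force exerted on $p_i$ by the left-external particles and by $C_i>0$ the total (leftward) force exerted on $p_i$ by the right-external particles. Because the configuration is an \eqco, the net force on each $p_i$ vanishes, which in our sign convention reads
$$
A_i+G_i-C_i=0,\qquad\text{so}\qquad G_i=C_i-A_i.
$$
This identity is the heart of the argument: equilibrium forces the internal force on $p_i$ to equal the net external force.

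It then suffices to show that $A_i$ and $C_i$ are monotone in $i$, and for the right reason. As $i$ increases, the particle $p_i$ moves to the right; hence its distance to every fixed left-external particle increases while its distance to every fixed right-external particle decreases. Since $F$ is strictly decreasing, every summand of $A_i$ decreases and every summand of $C_i$ increases, so $A_i$ is decreasing and $C_i$ is increasing in $i$. Consequently $G_i=C_i-A_i$ is (strictly) increasing, which is the desired monotonicity; this is the statement that feeds into \Prr{ratios}.

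The only genuinely delicate point is analytic rather than conceptual: the sums defining $A_i$ and $C_i$ are infinite, so I must check that they converge and that the term-by-term monotonicity survives passage to the limit. Both are immediate from the definition of an \eqco, which guarantees precisely that the total force exerted on each particle from each side is finite; this legitimizes the equilibrium identity $A_i+G_i-C_i=0$, and the monotonicity of a convergent series whose terms each move in the same direction is automatic. I expect no further obstacle.
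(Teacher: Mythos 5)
Your proof is correct and is essentially the rigorous version of the paper's one-line sketch (``the other forces only make the situation worse''): both arguments rest on the equilibrium identity equating the internal force on $p_i$ with the net external force $C_i-A_i$, together with the observation that the external forces are term-by-term monotone in $i$ since $F$ is strictly decreasing. You simply carry this out directly rather than by contradiction, and you correctly handle the convergence of the external sums, which the paper leaves implicit.
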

\begin{proof}[Proof (Sketch)]
	If they are not, then the other forces only make the situation worse.
\end{proof}

\begin{figure}[h]
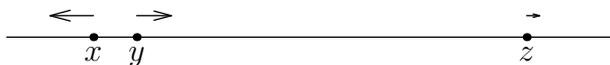

\begin{center}
\begin{asy}
import graph;
size(8cm);
pair up=(0,0.5);
pair l=(-2,0), r=(12,0);
pair x=(0,0), y=(1,0), z=(10,0);
draw(l -- r);
dot(x); dot(y); dot(z);
label("$x$", x, S);
label("$y$", y, S);
label("$z$", z, S);
draw(x+up--x+up-(1, 0), Arrow(SimpleHead));
draw(y+up--y+up+(0.8, 0), Arrow(SimpleHead));
draw(z+up--z+up+(0.3, 0), Arrow(SimpleHead));
\end{asy}
\end{center}
\caption{Why Proposition \ref{monoforce} implies Proposition \ref{ratios}.} \label{fig:three}
\end{figure}

To see why Proposition \ref{monoforce} implies Proposition \ref{ratios} let $x<y<z$ be three consecutive points in
an equilibrium configuration, hold the points $x, y$ fixed and let $z$ move far to the right (see Fig.\ \ref{fig:three}).
Observing the inner forces of the triple we see that if $z$ is far enough to the right then the force on $x$ is negative
(it is mostly affected by $y$), the force on $y$ is positive (it is mostly affected by $x$) and the force on $z$ is positive but
very small, violating the monotonicity proved in Proposition \ref{monoforce}.

In fact, the particles in Proposition~\ref{monoforce} do not have to be part of an \eqco; the statement holds for any finite set $S$ of consequtive particles that are in equilibrium inside some configuration. Even stronger, the first particle in $S$ does not have to be in equilibrium.

\begin{proposition} \label{extension}
Suppose that the force function $F$ is strictly monotone decreasing and continuous, and $\int_{1}^\infty F(x) dx< \infty$. For every uniformly discrete sequence of particles $S_- = \{x_{-1}, x_{-2}, \ldots\}$ with $x_i < x_{i-1} < 0$, there is a sequence of particles $S_+ =  \{x_0, x_{1}, x_{2}, \ldots\}, x_i > x_{i-1} > 0$, such that each particle in $S_+$ is in equilibrium in the configuration $S_- \cup S_+ = \{x_i\}_{i\in \ZZ}$. Moreover, $x_0$ can be chosen arbitrarily.
\end{proposition}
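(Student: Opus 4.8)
The plan is to construct $S_+$ as a limit of finite configurations. Write $\ell(t) = \sum_{i\le -1} F(t - x_i)$ for the force exerted at a point $t\ge 0$ by the given left particles; since $S_-$ is uniformly discrete and $\int_1^\infty F < \infty$, this series converges and $\ell$ is a positive continuous function on $[0,\infty)$, with $\ell(x_0)$ a fixed finite number that I abbreviate $L_0$. In these terms I must produce $x_0 < x_1 < x_2 < \cdots$ so that for every $n \ge 0$
\[
\ell(x_n) + \sum_{0 \le i < n} F(x_n - x_i) \;=\; \sum_{j > n} F(x_j - x_n),
\]
the left-hand side being the force on $x_n$ from the left and the right-hand side the force from the right.

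For each $N$ I first solve the truncated problem $P_N$: find $x_0 < x_1 < \cdots < x_N$ (with $x_0$ the prescribed value) for which the identity above holds, the right sum being restricted to $j \le N$, for each $n = 0,\ldots,N-1$. This is $N$ equations in the $N$ unknowns $x_1,\ldots,x_N$. Since $F$ is a continuous strictly decreasing bijection onto its range, each balance equation can be inverted, and I would solve $P_N$ by induction on $N$ together with an intermediate-value argument: for $N=1$ the single gap is $F^{-1}(L_0)$, and for the inductive step one checks that, as the relevant gap parameter is varied, the net force on the newly balanced particle runs monotonically from positive to negative, so a zero exists. The monotone dependence of the balancing positions on the data makes this selection continuous.

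The heart of the matter is a priori control of the truncated solutions uniformly in $N$. Introduce the tension across the cut before $x_n$,
\[
\Phi_n \;=\; \sum_{i < n \le j} F(x_j - x_i),
\]
and observe that equilibrium at $x_n$ is exactly the statement $\Phi_{n+1} = \Phi_n$; hence in $P_N$ the tension is a single constant $\tau_N = \Phi_0 = \cdots = \Phi_N$. Keeping only the $j=n$ term in $\Phi_n$ shows that the one-sided force on every particle is at most $\tau_N$, whence by strict monotonicity every gap satisfies $g_n \ge F^{-1}(\tau_N)$. The main obstacle is to bound $\tau_N$ from above uniformly in $N$: only then is the lower bound on the gaps uniform, so that no two particles collide in the limit. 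This is where the circular dependence bites — a uniform gap bound needs a tension bound and conversely — and I would break it either by showing that the solution of $P_{N+1}$ lies pointwise to the left of that of $P_N$ (adding a far particle only pushes the others inward, so $\tau_N$ is monotone with finite limit), or by setting up the infinite system directly as a Schauder fixed point on a box $[\gamma,\Gamma]^\NN$ of gap sequences, invariance of the box being exactly the sought a priori bounds.

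Granting the uniform lower bound on the gaps, the positions $x_n^{(N)}$ — monotone in $N$ and suitably bounded — converge to limits $x_n$ along a subsequence (a diagonal argument), and the limiting gaps remain bounded below, so $S_+ = \{x_0, x_1, \ldots\}$ is a genuine increasing sequence tending to $+\infty$. The uniform lower gap bound together with $\int_1^\infty F < \infty$ produces a single summable majorant $\sum_m F(\gamma m)$ for all the force series at once; dominated convergence then lets every balance identity pass to the limit, so each particle of $S_+$ is in equilibrium in $S_- \cup S_+$. Since $x_0$ enters the whole construction only as a free parameter, it may be prescribed arbitrarily, which completes the proof.
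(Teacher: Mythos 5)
Your overall architecture (solve finite truncated problems, then pass to a limit by compactness, with $x_0$ entering as a free parameter) matches the paper's strategy, and the tension identity --- equilibrium at $x_n$ being equivalent to $\Phi_{n+1}=\Phi_n$ --- is a genuinely nice reformulation. But the proposal has a real gap, and you name it yourself: the uniform-in-$N$ a priori bound (an upper bound on $\tau_N$, equivalently a positive lower bound on the gaps, together with an upper bound on the positions so that no particle escapes to infinity) is never proved. The two exits you offer are programs, not arguments. For the first: even if you established that the solution of $P_{N+1}$ lies pointwise to the left of that of $P_N$, the positions are then decreasing in $N$ and bounded below only by $x_0$, so without the missing gap bound they could still collapse onto one another; monotonicity alone does not break the circularity you acknowledge. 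For the second, you concede that invariance of the Schauder box \emph{is} the sought bound. Moreover, when $F$ is bounded (e.g.\ $F(d)=e^{-d}$, which satisfies all hypotheses), the inequality $F(g)\le\tau_N$ yields no lower bound on $g$ once $\tau_N\ge\sup F$, so even the step ``tension bound implies gap bound'' needs repair. A secondary problem is the solvability of $P_N$ itself: for $N=1$ you need $L_0=\ell(x_0)$ to lie in the range of $F$, which fails for bounded $F$ when $S_-$ has small (but uniformly discrete) gaps; and the inductive step invokes a one-parameter intermediate-value argument for what is really a coupled system of $N$ equations.

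The paper resolves exactly these two difficulties differently. Existence for the finite systems is variational: it confines $n-1$ particles in an interval $(0,x_n)$, where the confining particle at $x_n$ is \emph{not} required to be in equilibrium (so no range-of-$F$ obstruction arises), minimizes the finite total energy built from $E(x,y)=\int_{|x-y|}^\infty F(z)\,dz$, and notes that interior minimizers are equilibria by Fermat's theorem; the prescribed value of $x_0$ is then hit by tuning the confining position $x_n$. The a priori control needed for the compactness limit is supplied by Proposition~\ref{ratios} (ratios of consecutive gaps are bounded, a consequence of the force-monotonicity statement Proposition~\ref{monoforce}, which applies to any finite block of consecutive particles in equilibrium): since the first gap is pinned down by the prescribed $x_0$, bounded ratios give, by induction along the configuration, both upper bounds on all positions (finiteness of the limits) and lower bounds on all gaps (no collisions). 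That structural input --- or some equivalent of it --- is precisely what your proposal is missing, so as it stands the proof is incomplete at its self-declared ``heart of the matter.''
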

\begin{proof}
For $n= 1,2,\ldots$, let $x_n$ be any positive real. Then there are $x^n_1, \ldots x^n_{n-1} \in (0,x_n)$ such that the particles at $\{x^n_1, \ldots x^n_{n-1}\}$ are in equilibrium in the configuration\\ $S_- \cup \{x^n_1, \ldots x^n_{n-1}\} \cup \{x_n\}$: we claim that the positions in $(0,x_n)$ minimising the energy of the particles at $\{x^n_1, \ldots x^n_{n-1}\}$ have this property. 
To make this argument precise, define the energy $E(x,y)$ contributed by a pair of particles at positions $x$ and $y$ by $E(x,y):= \int_{z=|x-y|}^\infty F(z) dz$. Note that this is finite by the choice of $F$ and the fact that $S_-$ is uniformly discrete. 

For $m\in \NN_{>0}$, let $S_-^m$ be the subsequence $ \{x_{-1}, \ldots, x_{-m}\}$ of $S_-$. In order to obtain the desired configuration $\{x^n_1, \ldots x^n_{n-1}\}$ we will consider a sequence of configurations $C^m= x^{n,m}_1, \ldots x^{n,m}_{n-1}$, where $x^{n,m}_i \in (0,x_n)$, such that the particles in $C^m$ are in  equilibrium in the configuration $S_-^m \cup C^m \cup \{x_n\}$ and use compactness to take a limit.

For this, given $m$ we define the energy $E= E(x^n_1, \ldots x^n_{n-1})$ of the configuration\\ $S_-^m \cup \{x^n_1, \ldots x^n_{n-1}\} \cup \{x_n\}$ to be 
$$E:=\sum_{i=0}^{n-1} \sum_{j=-1}^{-m} E(x^{n}_i,x_j) + \sum_{i=0}^{n-1} \sum_{j=0}^{n-1} E(x^{n}_i,x^{n}_j),$$
i.e.\ the energy contributed by all pairs involving at least one of the particles $\{x^n_1, \ldots x^n_{n-1}\}$.  It is not hard to see that there are $x^{n,m}_1, \ldots x^{n,m}_{n-1} \in (0,x_n)$ minimising $E$ by the continuity of $F$ and the fact that $E$ increases if a particle gets too close to $x_{-1}$ or $x_n$. Note that the partial derivative of $E$ with respect to $x^n_i$ equals the total force exerted on the particle at $x^n_i$ by the definition of $E$, and by Fermat's theorem this has to vanish at any configuration minimising $E$. Thus each particle in $\{x^{n,m}_1, \ldots x^{n,m}_{n-1}\}$  is in equilibrium as claimed. By a standard compactness argument, there is a sequence $m_1, m_2, \ldots$ such that the position of $x^{n,m_j}_i$ converges, for each $i$, as $m_j$ goes to infinity. Define the limit configuration by $x^n_i:= \lim x^{n,m_j}_i$. It now follows easily from the continuity of $F$ that the particles at $\{x^n_1, \ldots x^n_{n-1}\}$ are in equilibrium in the configuration $S_- \cup \{x^n_1, \ldots x^n_{n-1}\} \cup \{x_n\}$ as desired.

Moreover, by the monotonicity and continuity of the forces, choosing $x_n$ appropriately we can ensure that $x_0$ equals any predetermined constant greater than $x_{-1}$.

By a compactness argument like the one used above, there is a sequence $n_1, n_2, \ldots$ such that the position of $x^{n_j}_i$ converges (possibly to infinity), for each $i$, as $n_j$ goes to infinity. By Proposition~\ref{ratios} (see also the remark after Proposition~\ref{monoforce}), the limit of $x^{n_j}_i$ is finite. Then defining $S_+ = \{\lim_j x^{n_j}_i\}_{i\in \ZZ}$ satisfies our requirements (here, we use the continuity of the forces again).
\end{proof}

\begin{proposition} \label{extension2}
If the gaps of $S_-$ in Proposition~\ref{extension} are bounded between real numbers $0<b<B$, then $S_+$ can be chosen so that its gaps are bounded between $\min(b, x_0-x_{-1})$ and $\max(B, x_0-x_{-1})$.
\end{proposition}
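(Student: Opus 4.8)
The plan is to read \Prr{extension2} as a discrete maximum principle for the gaps. Write the whole configuration as $\{x_i\}_{i\in\ZZ}=S_-\cup S_+$, set $g_i:=x_i-x_{i-1}>0$, so that $g_i\in[b,B]$ for $i\le-1$ and the central gap is $g_0=x_0-x_{-1}$; by \Prr{extension} every $x_i$ with $i\ge0$ is in equilibrium. The comparison behind \Tr{extrem} needs \emph{both} endpoints of a candidate extremal gap to be in equilibrium; here this holds for every gap $g_k$ with $k\ge1$, but not for the gaps of $S_-$ nor for $g_0$. Thus the heuristic is that an extremal gap can never sit strictly inside $S_+$, so the largest and smallest gaps of the configuration must occur among the $S_-$ gaps or at $g_0$, which is exactly the asserted range $[\min(b,g_0),\max(B,g_0)]$.

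Core step (attained extremum). For the upper bound suppose first that $\sup_{k\ge1}g_k$ is \emph{attained}, at the least index $k\ge1$ with $g_k=\sup_j g_j$, and assume $g_k>M:=\max(B,g_0)$. Then $g_k>g_{k-1}$ and $g_k\ge g_j$ for every $j\in\ZZ$, and both $x_{k-1},x_k$ are in equilibrium. Because $S_-$ and $S_+$ are each infinite, the sums $F^-(x_{k-1}),F^-(x_k)$ and $F^+(x_{k-1}),F^+(x_k)$ run over infinitely many particles with no boundary term, so the term-by-term estimate of \Tr{extrem} applies verbatim: maximality of $g_k$ among the gaps to its left together with $g_k>g_{k-1}$ gives $F^-(x_k)<F^-(x_{k-1})$, while maximality among the gaps to its right gives $F^+(x_k)\ge F^+(x_{k-1})$; combined with $F^-(x_i)=F^+(x_i)$ for $i=k-1,k$ this is a contradiction. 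The lower bound is handled by the same computation with all inequalities reversed, an attained minimal gap inside $S_+$ being impossible, so the infimum is at least $\min(b,g_0)$.

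Main obstacle (attainment at infinity). The real difficulty is that $\sup_{k\ge1}g_k$ need not be attained: the gaps could in principle drift upward as $k\to\infty$, and then there is no single gap on which to run the argument. This is exactly where the phrase ``can be chosen'' in the statement matters, for a general equilibrium extension may well let the gaps drift, whereas the extension produced in \Prr{extension} is built from energy-minimising finite configurations and is far more rigid. I would therefore prove the bound at each \emph{fixed} index by returning to those finite minimisers $\{x^{n}_i\}$ (with the wall at $x_n$): there the only place gaps can exceed $\max(B,g_0)$ is the neighbourhood of the receding wall, since a particle just to the left of a distant wall feels almost no rightward resistance and spreads out; for an index $k$ kept fixed while $n\to\infty$ this wall region runs off to $+\infty$, so one expects $g^{n}_k\le\max(B,g_0)$ uniformly in $n$, a bound that then passes to the limit configuration of \Prr{extension}. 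Here \Prr{ratios} is used to keep the relevant particles at bounded positions, so that the forces coming from the wall and from the far tail of $S_-$ vanish in the limit.

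Where I expect to spend the effort. The delicate technical heart is precisely the separation, in the finite minimisers, of a ``bulk'' where gaps are controlled by the boundary data $b,B,g_0$ from the ``wall region'' where they are allowed to grow; quantitatively one must show that the rightward push of the fixed half-lattice $S_-$, which is bounded below of order $1/x_k$ because its gaps are at most $B$, forces enough particles into any bounded window to keep the early gaps below $\max(B,g_0)$. Making this uniform in $n$ and compatible with the two limits taken in \Prr{extension} is the crux; once it is in place, the reversed inequalities give the lower bound $\min(b,g_0)$ with no extra work.
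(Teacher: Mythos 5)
Your ``core step'' is correct and is exactly the engine of the paper's proof: the argument of \Tr{extrem} uses equilibrium only at the two endpoints of an extremal gap, so an attained extremal gap spanned by two equilibrium particles and violating the bounds is impossible. You have also correctly located the real difficulty, namely that $\sup_{k\ge 1}g_k$ (or the infimum) need not be attained in the limit configuration. But there the proposal stops being a proof: the whole treatment of the non-attained case consists of declarations of intent (``one expects $g^n_k\le\max(B,g_0)$ uniformly in $n$'', ``the crux'', ``where I expect to spend the effort''), not arguments. Moreover the route you sketch is unlikely to work as stated: in the finite minimisers of \Prr{extension} the configuration terminates on the right at the single non-equilibrium particle $x_n$, so the sums $F^+$ are finite and the term-by-term comparison of \Tr{extrem} acquires an unmatched boundary term, and the gap adjacent to the wall has only one endpoint in equilibrium, so the maximum principle cannot control it; you would therefore need genuinely new quantitative estimates, and the one you hint at (a rightward push ``of order $1/x_k$'') is Coulomb-specific, whereas \Prr{extension2} concerns arbitrary admissible $F$.

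The paper resolves the difficulty in a different and much cheaper way: it does not try to prove rigidity of the original construction, but redesigns it so that the maximum principle applies at every finite stage. In the proof of \Prr{extension}, the auxiliary particle at $x_n$ is replaced by a one-way infinite arithmetic progression $x_n,\,x_n+a,\,x_n+2a,\ldots$ with $a\in(b,B)$. This has two effects. First, each stage-$n$ configuration is infinite on both sides (full $S_-$ on the left, the progression on the right), so the sums in \eqref{sums} are all infinite and the comparison of \Tr{extrem} goes through with no boundary terms. Second, the gaps inside $S_-$ lie in $[b,B]$, the gaps inside the progression equal $a\in(b,B)$, and the gap at $x_0$ is $x_0-x_{-1}$; hence, as the paper argues, a gap violating the claimed bounds cannot involve $S_-$ and must be spanned by equilibrium particles among $x^n_1,\ldots,x^n_{n-1}$ --- and among finitely many such gaps the extremum \emph{is} attained, so \Tr{extrem} yields a contradiction at every stage $n$, and the bounds pass to the limit. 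In short: your first half coincides with the paper's mechanism, but the key idea --- exploiting the freedom in ``can be chosen'' by replacing the wall with an arithmetic progression, rather than analysing the original minimisers quantitatively --- is missing, and without it (or a worked-out substitute) the statement is not proved.
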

\begin{proof}
We repeat the proof of Proposition~\ref{extension}, except that we replace the particle at $x_n$ with an 1-way infinite arithmetic progression $x_n, x_n +a , x_n +2a, \ldots$, where $a$ is any real in $(b,B)$. We claim that, for every $n\in \NN$, the resulting gaps of $\{x^n_1, \ldots x^n_{n-1}\}$ are bounded between $\min(b, x_0-x_{-1})$ and $\max(B, x_0-x_{-1})$. Indeed, let $x,y$ be the particles spanning the largest (respectively smallest) gap of $\{x^n_1, \ldots x^n_{n-1}\}$. If this gap is longer than $\max(B, x_0-x_{-1})$ (resp.\ smaller than $\min(b, x_0-x_{-1})$), then we can repeat the main argument of the proof of Theorem~\ref{extrem} to obtain a contradiction, since such a gap cannot involve any particle in $S_-$, and in that proof we only used the equilibrium for the particles $x,y$.
\end{proof}

We remark that we do not know if Proposition~\ref{extension2} is true for every \eqco\ $S_+$. (We only proved it for one \eqco.)

\medskip
Finally, we adapt the proof of Proposition~\ref{extension} to obtain the main result of this section

\begin{theorem}  \label{nailed}
For every strictly monotone decreasing and continuous force function $F: \RR_+ \to \RR_+ $, there is a configuration $ \{\ldots, x_{-2}, x_{-1}, x_0, x_{1}, x_{2},\ldots\} $ of particles on $\RR$ in which all particles except $x_0=0$ are in equilibrium, and $x_{-1}$ and $x_{1}$ can be chosen arbitrarily. 
\end{theorem}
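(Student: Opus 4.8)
The plan is to adapt the energy-minimisation argument of Proposition~\ref{extension} to a two-sided configuration with a fixed central particle. Concretely, I would nail $x_0=0$ and, for each $n$, install two confining tails: a one-way arithmetic progression with gap $r>0$ occupying the far left beyond some index $-N_n$, and one with gap $s>0$ occupying the far right beyond index $N_n$, as in Proposition~\ref{extension2}. The remaining particles $x_{-(N_n-1)},\dots,x_{-1},x_1,\dots,x_{N_n-1}$ (all except the nail) are declared free, and I would place them so as to minimise the total pairwise energy $E=\sum E(x_i,x_j)$, where $E(x,y)=\int_{|x-y|}^\infty F$. Exactly as in Proposition~\ref{extension}, a minimiser exists in the interior by continuity of $F$ together with the fact that $E$ grows as any free particle approaches a neighbour or a tail, and Fermat's theorem shows that the net force vanishes at every free particle. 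Thus every particle other than the nail $x_0=0$ is in equilibrium in each finite configuration, while the nail itself carries whatever residual force is needed.

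Next I would pass to the limit $n\to\infty$, along which the two tails recede to $\pm\infty$ and the free region exhausts $\RR\setminus\Set{0}$. Using a uniform lower bound on the gaps (so that the configurations are uniformly discrete, coming from the Proposition~\ref{extension2} gap bounds with the nail playing the role of the innermost reference point) together with Proposition~\ref{ratios} to keep consecutive gap ratios bounded, I would extract a subsequence along which every $x_i$ converges to a finite limit. Continuity of $F$ then transfers the equilibrium of each free particle to the limit configuration, yielding a bi-infinite, uniformly discrete sequence in which every particle except $x_0=0$ is in equilibrium. This part is essentially the two-sided mirror of Proposition~\ref{extension} and I expect it to be routine.

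The substantive point is to realise prescribed innermost gaps $x_1=\beta$ and $-x_{-1}=\alpha$ for arbitrary $\alpha,\beta>0$. I would track the two innermost gaps of the limit configuration as functions $g_-(r,s)$ and $g_+(r,s)$ of the two tail-gaps, and argue that they are continuous, with the boundary behaviour that $g_+\to 0$ as $s\to 0^+$ and $g_+\to\infty$ as $s\to\infty$ (and symmetrically for $g_-$ in $r$), by the monotonicity and continuity of the forces exactly as in the one-variable prescription of $x_0$ in Proposition~\ref{extension}. A continuity argument (an intermediate-value/degree argument in the two variables $(r,s)$) then produces a pair $(r,s)$ for which $(g_-,g_+)=(\alpha,\beta)$.

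The main obstacle I anticipate is precisely this simultaneous prescription: because the left and right half-configurations interact through the (summable but genuinely present) long-range forces, $g_+$ depends on $r$ as well as on $s$, so the tuning is not literally one-dimensional per side. I would resolve this by exploiting that the nailed particle at $0$ acts as a fixed barrier between the two halves, which makes each tail-gap control predominantly the innermost gap on its own side; this diagonal dominance (or, failing a clean monotonicity, a topological degree computation using the stated boundary behaviour) is what makes the two-dimensional continuity argument go through. Everything else---existence of the finite minimisers, equilibrium via Fermat, and the compactness limit---follows the template already established for Propositions~\ref{extension} and~\ref{extension2}.
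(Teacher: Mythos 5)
Your overall scaffolding (finite configurations with a nail at $0$, energy minimisation plus Fermat, compactness limit) matches the paper's strategy, but the crux of the theorem --- realising the prescribed values $x_{-1}$ and $x_1$ simultaneously --- is exactly the step you leave to a two-dimensional intermediate-value/degree argument, and that step has a genuine gap. The maps $g_\pm(r,s)$ you want to apply it to are not even well defined: at each finite $n$ the energy minimiser need not be unique, and your infinite configuration is produced by extracting a convergent subsequence, so different choices of minimisers and of subsequences can yield different innermost gaps for the same $(r,s)$. Even granting a measurable selection, continuity of $g_\pm$ in $(r,s)$ does not follow from continuity of $F$ (minimisers can jump as the parameters vary), and the ``diagonal dominance'' of the two tails is asserted, not proved: the cross-dependence of $g_+$ on $r$ is genuine, and without a continuous map with controlled boundary behaviour on a rectangle there is no degree to compute. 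Note also that the one-variable prescription of $x_0$ in Proposition~\ref{extension}, which you cite as the model, is a one-parameter monotonicity/IVT statement made at the finite stage; it does not supply the two-parameter machinery you need.

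The paper avoids this issue entirely with a different idea, which is the actual content of its proof. Fix $n$, nail $x_0=0$, and call a configuration $\Set{x_{-n},\ldots,x_{-1},0,x_{1},\ldots,x_n}$ $0$-centered if every particle except possibly $x_{-n},x_0,x_n$ is in equilibrium. Instead of tuning parameters to hit $x_{-1}=a$ and $x_1=b$, the paper maximises the span $x_n-x_{-n}$ over all $0$-centered configurations satisfying only the one-sided constraints $x_{-1}\ge a$ and $x_1\le b$; it shows this supremum $d$ is finite and attained, and then that any maximiser $Y$ must satisfy $y_{-1}=a$ and $y_1=b$ exactly. Indeed, if say $y_{-1}=a+\epsilon$, one moves the outermost particle $x_{-n}$ left by $\epsilon$ and then relaxes the non-fixed particles one at a time, in rounds: each relaxation moves a particle monotonically to the left, and a force-comparison argument shows no particle ever moves by more than $\epsilon$, so the (monotone, bounded) iteration converges to a $0$-centered configuration that still satisfies the constraints but whose span exceeds $d$ by $\epsilon$ --- a contradiction. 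This converts the two-sided prescription into a one-dimensional extremal problem and never needs continuity or uniqueness of minimisers. To salvage your route you would have to either prove a selection theorem making $(g_-,g_+)$ continuous with the stated boundary behaviour, or replace the degree argument by an extremal argument of this kind.
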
 
\begin{proof}
We use the strategy of the proof of Proposition~\ref{extension}, except that we replace the sequence $S_-$ with a single particle at a position $x_{-n}$, we fix a particle at $x_0=0$ which does not have to be in equilibrium, and we introduce particles $\{x^n_{-1}, \ldots x^n_{-(n-1)}\}$ in equilibrium for each $n\in \NN$. We need to show that, by choosing $x_{-n}, x_n$ appropriately, we can bring the particles $x^n_{-1}, x^n_1$ to the desired positions for each $n$. We can then take a limit of such  configurations as $n\to \infty$ as in Proposition~\ref{extension}.

Define a \emph{0-centered configuration} to be a sequence $ \{x_{-n}, \ldots, x_{-1}, x_0=0, x_{1}, \ldots\, x_n\} $  in which all particles except possibly $x_{-n}, x_0, x_n$ are in equilibrium (when forces between particles are given by $F$).
Thus it remains to prove that for every $a<0, b>0$, $n\in \NN_*$, there is a 0-centered configuration $ \{x_{-n}, \ldots, x_{-1}, x_0, x_{1}, \ldots\, x_n\} $ with $x_{-1}=a$ and $x_1=b$.

To prove this, let 
$$d:= \sup \{x - x' \mid \text{ there is a 0-centered configuration with } x_{-n} =x', x_n=x, x_{-1} \geq a, \text{ and } x_1 \leq b \}.$$

Let us prove that $d< \infty$. Let $\{x_{-n}, \ldots, x_{-1}, x_0, x_{1}, \ldots\, x_n\} $ be a candidate configuration. Since $x_1 \leq b$, the force to the right exerted on $x_1$ from particle $0$ is at least $F(b)$, and has to be balanced by the particles $x_2, \ldots, x_n$. This gives an upper bound $b'$ on $x_2$, as the force on $x_1$ to the left is less than $(n-1) F(x_2-x_1)$ by the monotonicity of $F$. Similarly, the force to the right exerted on $x_2$ from particle $0$ is lower bounded by $F(b')$, and this imposes an upper bound on $x_3$, and so on up to $x_n$. Applying the same argument to the negative particles we also see that $x_{-n}$ is bounded, hence $x_n-x_{-n}$ is bounded.

Since $F$ is continuous, this supremum is attained by some 0-centered configuration $Y= \{y_{-n}, \ldots, y_{-1}, y_0, y_{1}, \ldots\, y_n\} $. We claim  
that $y_{1}= a$ and $y_1 =b$ in $C$, which would complete our proof. 

Suppose to the contrary that  $y_{-1} = a + \epsilon$ for some $\epsilon>0$ (and possibly $y_1<b$). We will produce a 0-centered configuration $Y'= \{y'_{-n}, \ldots, y'_{-1}, y_0=0, y'_{1}, \ldots\, y'_n\} $ where $y'_{-n} = y_{-n}- \epsilon$ and $y'_i \in [y_i-\epsilon, y_i]$ for every $i$, and in fact $y'_0= y_0=0$ and $y'_n=y_n$. This contradicts the choice of $Y$ as $Y'$ increases $d$ by $\epsilon$, and satisfies all other requirements. 

We will obtain $Y'$ as a limit of sequences $Y^j= \{y^j_{-n}, \ldots, y^j_{-1}, y_0=0, y^j_{1}, \ldots\, y^j_n\}, j= 0,1,\ldots$. 

To begin with, we define $Y^0$ by letting $y^0_{-n} = y_{-n}- \epsilon$, and letting $y^0_i= y_i$ for every other $i$. In fact, we will never change the position of the particle $-n$ again, that is, we fix $y^j_{-n} = y_{-n}- \epsilon$ for every $j$. We will also never change the positions of particles $0$ and $n$; we call the particles $-n, 0, n$  the \emph{fixed particles}.

Note that no non-fixed  particle is in equilibrium in $Y^0$: for all non-fixed particles, we have reduced the force from the left in comparison to $Y$, and kept the force from the right fixed. By the continuity and monotonicity of $F$, there is a position $y\in (y^0_{-n}, y^0_{-n+1})$ such that if we move the particle $-n+1$ from  $y^0_{-n+1}$ to $y$, then that particle will be in equilibrium. We now define $Y^1$ by letting $y^1_{-n+1}=y$ and $y^1_i= y^0_i$ for every other $i$.

We proceed similarly with the next particle $-n+2$. Since we moved the previous two particles to the left, it is still true that the net force on that particle from the left has been reduced, and we move it to the left to a position $y^2_{-n+2}$ to bring it to equilibrium and define $Y^2$ (we are aware that the particle $-n+1$ is not any more in equilibrium in $Y^2$).

We proceed inductively to define the sequences $Y^3,Y^4, \ldots, Y^{2(n-1)}$, each of which only moves the position of the non-fixed particle $-n+3, \ldots, -1, 1, \ldots n-1$ respectively to the left. Note that after these changes, all non-fixed particles but $n-1$ are again out of equilibrium, and the net force they experience is to the left. We repeat another round of similar changes, obtaining sequences $Y^{2(n-1)+1}, \ldots, Y^{4(n-1)}$ in which particle $-n+1, \ldots, -1, 1, \ldots n-1$ respectively are moved to the left to reach  a temporary equilibrium. After we are done we perform another such round, and so on ad infinitum.

Since each $y^j_i$ is monotone decreasing in $j$, and bounded below by $y^0_{-n}$, it converges to some value $y'_i$, and we use these values to define the limit configuration $Y'$.

Next, we claim that for every $j$, and every particle $i$, we have $y^j_i\in [y_i-\epsilon, y_i]$. For if not, then consider the first step $j$ when a counterexample $y^j_i$ arises. Then particle $i$ has to be in equilibrium in $Y^j$ because it must have just been moved. Let us compare the forces exerted on this particle in $Y^j$ to those exerted on it in $Y$. All particles have been moved to the left if at all, and particle $i$ has experienced the largest displacement as all other particles have moved by at most $\epsilon$. But this means that all particles to the left of $i$ are closer to $i$ in $Y^j$  than they were in $Y$, and all particles to the right of $i$ are further from $i$ in $Y^j$  than they were in $Y$. By the strict monotonicity of $F$, this contradicts the fact that $i$ was in equilibrium in both $Y$ and $Y^j$.

This proves our claim, which implies that $y'_i\in [y_i-\epsilon, y_i]$ for every $i$. In particular, $y'_{-1}\geq y_{-1}-\epsilon$, and so $y'_{-1}\geq a$ (and clearly $y'_1 \leq b$). Thus $Y'$ is a candidate for the definition of $d$ if it is 0-centered. And indeed it is: since the positions of all particles converge in the sequence $(Y^j)$, the net force on each particle $i$ converges as $j\to \infty$ by the continuity of $F$, and as it assumes the value 0 infinitely often ---namely, at steps at which we bring $i$ to equilibrium--- it has to converge to 0.

Thus $Y'$ contradicts the choice of $Y$ as claimed, which proves that $y_{-1} = a$. By the same arguments, we obtain a contradiction if $y_1<b$ by moving all particles to the right a bit.
\end{proof}

\section{Questions}

\Tr{th:main} says that if a 1-way infinite sequence of particles $S_+$ (at bounded distances) can be brought to equilibrium by another 1-way infinite sequence $S_-$, then $S_+$ uniquely determines $S_-$. We can ask if the converse can be proved: if $S_-$ can be used to bring some $S_+$  to equilibrium, is $S_+$ uniquely determined by $S_-$?

\section*{Acknowledgement}
We would like to thank Florian Theil for various suggestions.

\end{document}